\newtheorem{lemma}{Lemma}
\newtheorem{proposition}{Proposition}
\newtheorem{theorem}{Theorem}
\title{\textbf{
A fast algorithm for computing Bell polynomials based on index break-downs using prime factorization
}}
\author{Hamed Taghavian \\
\small{KTH Royal Institute of Technology} \\
\small{hamedta@kth.se}}
\begin{document}
\maketitle

\abstract{
By establishing an interesting connection between ordinary Bell polynomials and rational convolution powers, several properties of Bell polynomials are obtained including some composition and inverse relations. Based on these results, new algorithms are presented for calculation of partial Bell polynomials. It is shown that the proposed method is more efficient than the conventional recurrence procedure for computing these polynomials, requiring far less arithmetic operations in most cases. A detailed analysis of the computation complexity is provided, followed by some numerical evaluations.\\
\textbf{Key words:} \textit{Bell polynomials; Primes; Convolution; Inverse relation; Computation complexity.}
}

\section{Introduction}\label{sec:intro}
Bell polynomials play an essential role in many basic mathematical problems appearing in nonlinear algebraic equations, nonlinear differential equations, integro-differential equations, probability theory, matrix algebra, combinatorics and umbral calculus. Partial exponential Bell polynomials are defined by \cite[p.412]{10}

\begin{equation} \label{3}
B_{n,k} (x)=\sum \binom{n}{j_1,j_2 \ldots j_n} \left(\frac{x_1}{1!}\right)^{j_1} \left(\frac{x_2}{2!}\right)^{j_2} \cdots \left(\frac{x_n}{n!}\right)^{j_n}   
\end{equation} 
based on the sequence $x =\lbrace x_1, x_2, \ldots \rbrace$, where the summation is over all $n$-tuples $j$ satisfying $j_1 ,j_2 ,\ldots ,j_n \in {\mathbb{Z}}^{\ge 0}$, $j_1 +j_2 +\ldots +j_n =k$ and $j_1 +2j_2 +\ldots +nj_n =n$. For (\ref{3}) to be well-defined, the length of sequence $x$ should be greater than or equal to $n-k+1$. There is a tremendous literature on Bell polynomials and their properties because of their remarkable applications, ranging from statistics and number theory to mathematical physics and dynamical systems (see for example \cite{b4}, \cite{bb4}, \cite{b2}, \cite{bb2} \cite{bb3} and \cite{b3}).
As a few simple samples, we may refer to:
\begin{itemize}
\item the relation between determinant of a matrix and its traces:
$$ \textnormal{det}(A)=\frac{1}{n!}B_n(\textnormal{tr}(A),-1!\textnormal{tr}(A^2),\ldots,(-1)^{n-1}(n-1)!\textnormal{tr}(A^n))$$
\item the relation between different symmetric polynomials:
$$p_n=n(-1)^n\sum_{k=1}^n \frac{1}{k}\hat{B}_{n,k}(-e_1,-e_2,\ldots,-e_{n-k+1})$$
where $p$ and $e$ denote power sums and elementary symmetric polynomials respectively.
\item derivatives of composite functions:
$$ \frac{d^n}{dx^n} f(g(x))=\sum_{k=1}^n f^{(k)}(g(x))B_{n,k}(g'(x),g''(x),\ldots,g^{(n-k+1)(x)})$$
\end{itemize}
and so on.

It is acknowledged that Bell polynomials also come useful in convolution calculus \cite{a1}, \cite{a2} and \cite{a3}. The explicit role of these polynomials in that area is described in this paper. Particularly, it is established how rational convolution powers of general sequences are expressed by means of partial ordinary Bell polynomials. An important consequence of these expressions is in the problem of calculating convolution roots of sequences. Problems of this kind were considered in \cite{c1}, \cite{c2}, \cite{c3} and \cite{c4} and in a few special cases were solved in \cite{18} using generating functions. The explicit solution to the most general case of this problem is given in this note. What then follows are some simple curious identities and inverse relations for Bell polynomials.

Inverse relations involving partial Bell polynomials, have been the concern of several recent studies (see for example \cite{a2}, \cite{a4}, \cite{7}, \cite{a5} and \cite{a7}). The obtained relations in these papers share the common trait of incorporating partial Bell polynomials in a summation over the second index. Inverse relations of this kind include the inverse relation of a complete Bell polynomial in \cite{a4} and the inverse relation \cite{a7}

\begin{equation}\label{0}
\left\lbrace \begin{array}{l}
y_n=\sum_{k=1}^n \binom{-an-b}{k}\hat{B}_{n,k}(x) /{(an+b)} \\
x_n=\sum_{k=1}^n \binom{-(an+1)/b}{k}b^k \hat{B}_{n,k}(y) /{(an+1)}
\end{array} \right.
\end{equation}
where $\hat{B}_{n,k}(.)$ denotes partial ordinary Bell polynomials. Apparently however, the explicit inverse of a single partial Bell polynomial does not exist in the literature. This can be seen as the solution to the problem of recovering $x$ given the sequence $y_n=B_{n,k}(x)$ for a fixed $k$, which is addressed in Theorem~\ref{tinv} of the present paper.

The composition identities obtained in this paper on the other hand, provide a way to express consecutive applications of arbitrarily many partial ordinary Bell polynomials on a sequence, as a single Bell polynomial. This result paves the way for a new recursive algorithm (described in (\ref{92})) which can be used to compute partial Bell polynomials $B_{n,k}(.)$ more efficiently. As the provided simulations suggest, this algorithm is capable of decreasing the number of arithmetic operations needed for computing Bell polynomials using the classical recursive approach (by over 99\% in some cases). Combining this with another lemma concerning Bell polynomials with argument sequences having initially zero samples yields in the most efficient Algorithm~3 (\ref{GenAl}) presented in Section~\ref{subsec:first-index} of this paper.

The proposed algorithms can make the scientific computations involving Bell polynomials much less costly. For some explicit samples of computations which involve Bell polynomials, one may refer to computing stability regions and tuning compensators in control systems \cite{b3,IRBell}, signals in electrical circuits \cite{z11, z12}, solutions to relaxation and diffusion processes \cite{a3}, random sums, Gibbs partitions, Moments and cumulants in probability theory \cite{z13, z14}. As a more specific example, computing the compound distribution of the random variable $S_k=\sum_{\tau=1}^{k} X_{\tau}$ is an important numerical problem in insurance mathematics \cite{z15}, which is just given by
$$ \mathbb{P}(S_k=n)=\hat{B}_{n,k} (p)$$
where $p_{n}$, $n\in {\mathbb{Z} ^{\geq 1}}$ is the probability distribution of the \emph{i.i.d} random variables $X_{\tau}$. Moreover due to the far-reaching applications of Bell polynomials, the contributions of this paper can also make the computations more efficient in different special functions \cite{ADD} and sequences connected with Bell polynomials, including Stirling numbers~\cite{ADD2}.

This paper is organized as follows:
\begin{itemize}[noitemsep]
\item The prerequisites and the necessary notation clarifications are provided in Section~\ref{sec:pre}.
\item The composition and inverse relations of Bell polynomials are presented in Section~\ref{sec:relations}.
\item The existing algorithms for computing Bell polynomials are reviewed and new algorithms are obtained for this purpose in Section~\ref{sec:AL}. Analysis of the algorithms, comparisons of the computation costs and numerical experiments are provided in the same section.
\item Concluding remarks are given in Section~\ref{sec:conc}.
\end{itemize}

\section{Preliminaries}\label{sec:pre}
The notation and preliminary information used in the paper are presented in this section. We begin with recalling definition of the general binomial coefficients:

\begin{equation}\label{5.1}
\binom{\alpha}{k}=\frac{\Gamma(\alpha+1)}{\Gamma(\alpha-k+1)\Gamma(k+1)}
\end{equation}
where $\Gamma (.)$ denotes the Gamma function \cite[p.1]{30}. Likewise, the $k$-permutations of $\alpha$ is given by
$$
(\alpha)_k= \frac{\Gamma(\alpha+1)}{\Gamma(\alpha-k+1)}
$$
Bell polynomials (\ref{3}) can be computed using the following recurrence relation \cite[p.415]{10}:

\begin{equation}\label{3.1}
B_{n,k}(x)=\sum_{j=1}^{n-k+1}\binom{n-1}{j-1}x_j B_{n-j,k-1}(x)
\end{equation}
with the initial condition $B_{0,0}(x)=1$ and the convention $B_{n,0}(x)=B_{0,k}(x)=0$ for $n,k \geq 1$. Based on (\ref{3}), partial ordinary Bell polynomials are defined as follows:

\begin{equation} \label{9)}
 \hat{B}_{n,k} (x ) = \frac{k!}{n!} B_{n,k} (y)
\end{equation}
where $y_n=n!x_n$. These polynomials obviously satisfy

\begin{equation}\label{9.1}
\hat{B}_{n,k} (cx)=c^{k} \hat{B}_{n,k} (x )
\end{equation} 
for any real scalar $c \neq 0$, and they can be calculated using the following recurrence relation \cite[p.366]{34}:

\begin{equation} \label{14)} 
\hat{B}_{n,k} (x )=\sum _{j=1}^{n-k+1}x_{j} \hat{B}_{n-j,k-1} (x)  
\end{equation} 
which holds for $n,k\ge 1$ on the condition $\hat{B}_{n,0} (x)=\delta _{n}$, where $\delta$ denotes the Kronecker delta. Moreover partial ordinary Bell polynomials (\ref{9)}) have the generating function \cite{a7}:

\begin{equation}\label{9.2}
\left(\sum_{j=1}^{+\infty } x_j z^{j}\right)^k=\sum_{n=k}^{+\infty} \hat{B}_{n,k} (x) z^{n}
\end{equation}
All discussions in this paper are made for real functions with integer support, $x_{n} :{\mathbb{Z}}\to {\mathbb{R}}$. This paper only focuses on ``right hand'' functions that are zero in the range $\lbrace -\infty ,\cdots ,n_{0} \rbrace$ for some $n_{0} \in {\mathbb{Z}}$. Also the terms ``functions with integer arguments'' and ``sequences'' are used interchangeably, as they both refer to the same concept.
The convolution of two real functions $x_{n}$ and $y_{n}$ with an integer argument $n\in {\mathbb{Z}}$ is defined as

\begin{equation} \label{1)} 
(x*y)_n=\sum _{j=-\infty }^{+\infty }x_j y_{n-j}   
\end{equation}
Accordingly, we denote the convolution power of sequence $x$ as $x^{*k} \buildrel\Delta\over= \overbrace{x*x*\cdots *x}^{k{\rm \; times}}$ where $x_{n}^{*0} =\delta _{n} $ and $x_{n}^{*1} =x_{n}$. A convolution power need not be a whole number and can be extended to rational numbers. For example, ${(1/k)}^{\textnormal{th}}$ convolution power of sequence $x$, denoted by $x^{*1/k}$, is defined as the appropriate sequence that satisfies

\begin{equation} \label{5)} 
(x^{*1/k} )_{n}^{*k} =x_{n}  
\end{equation}
where $k\in {\mathbb{N}}$ and $x$ is a known sequence. It can be shown that the sequence $x^{*1/k}$ in \eqref{5)} only exists if $k|n_{0}$, in which $n_{0}$ is the least integer satisfying $x_{n_{0}} \ne 0$ \cite{18}. In addition, since only a real-valued solution is in the scope of the present paper, it can be deduced from \cite{18} that equation \eqref{5)} has a unique solution for $x^{*1/k}$ if $k$ is odd and the double solution $\pm x^{*1/k}$ otherwise. In \eqref{5)} the function $x^{*1/k}$ is called the $k^{\textnormal{th}}$ convolution root of $x$ and sometimes, it is called a convolution radical.

\section{Composition and inverse relations}\label{sec:relations}
This section concerns with some properties of Bell polynomials which help to establish the computation algorithms in the next section. Convolution calculus can facilitate derivation of these properties, shortening their proofs. Hence we will first describe the connection between Bell polynomials and rational convolution powers. We begin with integer convolution powers which are trivially expressed in terms of Bell polynomials, as shown in the following lemma.

\begin{lemma}\label{l2}
Let $x_{n}$, $n\in {\mathbb{Z}}$ be a sequence satisfying $x_{n} =0$ for $n<1$. The convolution power $x_{n}^{*k} $, $n\in {\mathbb{Z}}$, $k\in {\mathbb{Z}}^{\ge 0}$ is then given by

\begin{equation} \label{13)} 
x_{n}^{*k} =\hat{B}_{n,k} (x) 
\end{equation}
\end{lemma}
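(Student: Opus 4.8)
The plan is to prove the identity through the generating function characterization of partial ordinary Bell polynomials in \eqref{9.2}. Since $x_n = 0$ for $n < 1$, the generating function of $x$ is the power series $X(z) = \sum_{j=-\infty}^{\infty} x_j z^j = \sum_{j=1}^{\infty} x_j z^j$, which has no constant term. The right-hand side of \eqref{13)}, namely $\hat{B}_{n,k}(x)$, is exactly the $z^n$-coefficient of $X(z)^k$ according to \eqref{9.2}, so it suffices to show that the left-hand side, the convolution power $x_n^{*k}$, is generated by the same series $X(z)^k$.

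First I would record the elementary fact that convolution corresponds to multiplication of generating functions: from the definition \eqref{1)}, the generating function of $x * y$ equals the product of the generating functions of $x$ and $y$, because the coefficient of $z^n$ in such a product is precisely $\sum_j x_j y_{n-j}$. Iterating this $k$ times gives $\sum_{n} x_n^{*k} z^n = X(z)^k$. Comparing the $z^n$-coefficients of this with \eqref{9.2} yields $x_n^{*k} = \hat{B}_{n,k}(x)$ for every $n$, which is the claim. For $n < k$ both sides vanish automatically: the lowest-order term of $X(z)^k$ is $x_1^k z^k$, and correspondingly the sum in \eqref{9.2} begins at $n = k$.

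An alternative, purely combinatorial route is induction on $k$ using the recurrence \eqref{14)}. The base case $k = 0$ holds since $x_n^{*0} = \delta_n = \hat{B}_{n,0}(x)$ by convention. For the inductive step, writing $x^{*k} = x * x^{*(k-1)}$ and expanding via \eqref{1)} gives $x_n^{*k} = \sum_{j} x_j\, x_{n-j}^{*(k-1)}$; invoking the support condition $x_j = 0$ for $j < 1$ together with the inductive hypothesis $x_{n-j}^{*(k-1)} = \hat{B}_{n-j,k-1}(x)$ (which vanishes unless $n - j \ge k-1$) collapses the sum to $\sum_{j=1}^{n-(k-1)} x_j \hat{B}_{n-j,k-1}(x)$, and this is exactly the right-hand side of \eqref{14)}.

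I expect no deep obstacle here; the lemma is essentially a restatement of \eqref{9.2} under the convolution-to-multiplication dictionary. The only point demanding care is the bookkeeping of the support: one must verify that the formally infinite convolution sums in \eqref{1)} truncate to the finite ranges appearing in \eqref{9.2} and \eqref{14)}, and that the two sides agree on the boundary values $n < k$. Since $X(z)$ has zero constant term, $X(z)^k$ is a genuine power series starting at degree $k$, so all generating-function manipulations may be read either as identities of formal power series or, where convergence holds, as analytic identities, with no issue of interchanging limits.
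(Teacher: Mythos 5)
Your argument is correct and takes essentially the same route as the paper, which likewise derives the identity directly from the generating function \eqref{9.2} together with the fact that $\hat{B}_{n,k}(x)=0$ for $n<k$; you have simply spelled out the convolution-to-multiplication step that the paper leaves implicit. The alternative induction via \eqref{14)} is a fine supplement but not needed.
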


\begin{proof}
Noting the fact that $\hat{B}_{n,k} (x)=0$ holds for $n<k$, the proof follows from (\ref{9.2}) in a straight-forward way.
\end{proof}

Using multiple time shifts and the time invariance property of the convolution operator \cite[p.14]{22}, Lemma~\ref{l2} can be easily generalized as

\begin{equation} \label{16)} 
x_{n}^{*k} =\hat{B}_{n+k(1-n_{0}),k} (y) 
\end{equation}
where $y_n=   x_{  n-1+n_{0} }   $ and $x$ is a sequence satisfying $x_{n}=0$ for $n<n_{0}$.

Now consider convolution roots which appear in the problem of solving \eqref{5)}. Assume that $x_{n} =0$ holds for $n<0$ and let $x_{0} \ne 0$. Furthermore, define a rational power of the complex number $z\in {\mathbb{C}}$ as $z^{1/k} =\exp \left(\ln \left(z\right)/k\right)$ in which $k\in {\mathbb{N}}$ and $\ln z$ is a branch of the complex logarithm that is holomorphic on an open disk with the radius $\left|x_{0} \right|$ and center $x_{0}$. A standard approach toward solving \eqref{5)} starts from the following identity based on generating functions:

\begin{equation} \label{48)} 
\sum _{n=0}^{+\infty }x_{n}^{*1/k} z^{n}  =\left(\sum _{n=0}^{+\infty}x_n z^n \right)^{1/k}  
\end{equation} 
Since $f(z)=\sum _{n=0}^{+\infty }x_{n} z^{n}$ is analytic at $z=0$ and $\ln z$ is analytic at $f(0)=x_{0}$, the function appeared on the right side of \eqref{48)} is analytic at $z=0$. This implies that a convergent solution $x_{n}^{*1/k}$ for all $n\in {\mathbb{Z}}$ exists for this problem, which is found in the next lemma.

\begin{lemma}\label{l6}
Let $x_{n}$, $n\in {\mathbb{Z}}$ be a sequence satisfying $x_{n}=0$ for $n<0$ and let $x_{0} \ne 0$. The $k^{\textnormal{th}} $ convolution root $x_{n}^{*1/k} $, $n\in {\mathbb{Z}}$, $k\in {\mathbb{N}}$ is given by

\begin{equation} \label{49)} 
x_{n}^{*1/k} =x_{0} ^{1/k} \sum _{j=0}^{n} \binom{1/k}{j} \hat{B}_{n,j} (x /x_0) 
\end{equation} 
\end{lemma}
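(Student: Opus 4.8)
The plan is to work entirely in the generating-function (frequency) domain supplied by \eqref{48)} and to reduce the claim to the generalized binomial theorem combined with the power-series identity \eqref{9.2}. First I would isolate the nonzero leading term by writing $\sum_{n=0}^{+\infty} x_n z^n = x_0\bigl(1 + g(z)\bigr)$, where $g(z) = \sum_{m=1}^{+\infty} (x_m/x_0) z^m$ is the generating function of the normalized sequence $x/x_0$ and satisfies $g(0)=0$. Raising to the power $1/k$ and factoring out the constant gives $\bigl(\sum_n x_n z^n\bigr)^{1/k} = x_0^{1/k}\bigl(1+g(z)\bigr)^{1/k}$, and since $g(0)=0$ the principal branch of $(1+w)^{1/k}$ admits the convergent binomial expansion $(1+g)^{1/k} = \sum_{j=0}^{+\infty} \binom{1/k}{j} g(z)^j$ for $z$ in a small disk about the origin.

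The second step converts each power $g(z)^j$ back into Bell polynomials. Applying \eqref{9.2} to the sequence $x/x_0$ with exponent $j$ yields $g(z)^j = \sum_{n=j}^{+\infty} \hat{B}_{n,j}(x/x_0)\, z^n$, where the $j=0$ term is handled by the convention $\hat{B}_{n,0}(x)=\delta_n$. Substituting this into the binomial expansion and interchanging the order of summation, the coefficient of $z^n$ becomes $x_0^{1/k}\sum_{j=0}^{n}\binom{1/k}{j}\hat{B}_{n,j}(x/x_0)$, where the upper limit is $n$ rather than $+\infty$ because $\hat{B}_{n,j}=0$ whenever $j>n$. Matching this coefficient against the left-hand coefficient $x_n^{*1/k}$ in \eqref{48)} then gives \eqref{49)}.

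The main obstacle I expect is the rigorous justification of the term-by-term rearrangement that produces the single finite sum over $j$. This requires absolute convergence of the double series $\sum_{j}\sum_n$ on some neighborhood of $z=0$, which I would establish from the analyticity already noted after \eqref{48)}: both $(1+g)^{1/k}$ and each $g^j$ are holomorphic at the origin, and $|g(z)|<1$ on a sufficiently small disk, so the majorant $\sum_j \bigl|\binom{1/k}{j}\bigr|\,|g(z)|^j$ converges there and the usual Fubini argument for power series licenses the interchange. The remaining points are purely bookkeeping: confirming the $j=0$ contribution reproduces $x_0^{1/k}\delta_n$ (consistent with $x_0^{*1/k}=x_0^{1/k}$), and noting that truncating the $j$-sum at $n$ loses nothing.
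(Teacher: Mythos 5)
Your proposal is correct and follows essentially the same route as the paper: both expand the right side of \eqref{48)} by the generalized binomial theorem and identify the powers of the tail series with $\hat{B}_{n,j}$ via the generating function \eqref{9.2}, truncating at $j=n$ since $\hat{B}_{n,j}=0$ for $j>n$. The only cosmetic difference is that you normalize by $x_0$ before expanding, whereas the paper expands $(x_0+g)^{1/k}$ directly and absorbs the resulting factors $x_0^{-j}$ afterwards using the homogeneity property \eqref{9.1}.
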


\begin{proof}
According to \eqref{48)} it is deduced that:

\begin{equation} \label{50)}
\begin{array}{ll}
x_{n}^{*1/k}				&=\frac{1}{n!} \frac{d^{n} }{dz^{n} } \left(\sum _{n=0}^{+\infty }x_{n} z^{n}  \right)^{1/k} _{|z=0}  \\
							&=\frac{1}{n!} \frac{d^{n} }{dz^{n} } \left(x_{0} +\sum _{n=1}^{+\infty }x_{n} z^{n}  \right)^{1/k} _{|z=0}
\end{array}
\end{equation} 
Since function $g(z)=\sum _{n=1}^{+\infty }x_{n} z^{n}$ is continuous at $z=0$ and $x_{0} \ne 0$, one may write $\left|x_{0} \right|>\left|\sum _{n=1}^{+\infty }x_{n} z^{n}  \right| $ in a neighborhood of the origin $\left|z\right|<\varepsilon$ for some $\varepsilon >0$. Therefore, by using the Newton's generalized binomial Theorem~\cite[p.397]{17} in this region, it is possible to write

\begin{equation} \label{52)} 
x_{n}^{*1/k} =\frac{1}{n!} \sum _{j=0}^{+\infty}    \binom{1/k}{j}   x_{0} ^{1/k-j}  \frac{d^{n} }{dz^{n} } \left(\sum _{n=1}^{+\infty }x_{n} z^{n}  \right)^{j} _{|z=0}  
\end{equation} 
Representation $\hat{B}_{n,j} (x)=\frac{1}{n!} \frac{d^{n} }{dz^{n} } \left(\sum _{n=1}^{+\infty }x_{n} z^{n}  \right)^{j} _{|z=0}$ which can be deduced from the generating function (\ref{9.2}) may then be used to write \eqref{52)} in terms of partial ordinary Bell polynomials as follows:

\begin{equation} \label{53)} 
x_{n}^{*1/k} =\sum _{j=0}^{+\infty }   \binom{1/k}{j}   x_{0} ^{1/k-j} \hat{B}_{n,j} (x)  
\end{equation} 

Since $\hat{B}_{n,j} (x)=0$ holds for $n<j$, only a finite number of terms in the summation \eqref{53)} needs to be considered. This lemma is eventually proved by using the property (\ref{9.1}).
\end{proof}

Lemma~\ref{l6} can be alternatively proved by calculating the successive derivatives of the function $\left(X(z)/x_0\right)^{1/k}$ with respect to $z$ using Theorem~B of \cite[section 3.5]{comtet} and the relation of potential polynomials with Bell polynomials. It can be shown that Lemma~\ref{l6} can be extended to find convolution roots of sequences that are zero in the range $n\in \lbrace -\infty ,\cdots ,n_0 -1 \rbrace$ where $n_{0} \in {\mathbb{Z}}$, $k|n_{0}$ as

\begin{equation} \label{54)} 
x_{n}^{*1/k} =x_{n_{0}} ^{1/k} \sum _{j=0}^{n-n_{0} /k}     \binom{1/k}{j}    \hat{B}_{n-n_{0} /k,j} ( y ) 
\end{equation}
where $ y_n=x_{n+n_0} /x_{n_0} $ and $x_{n}$, $n\in {\mathbb{Z}}$ is a sequence satisfying $x_{n}=0$ for $n<n_{0}$ and $x_{n_{0} } \ne 0$. It shall not be omitted that in case $k$ is even, one may define the convolution root as the expressions obtained above multiplied by $-1$ instead.\\

Based on the above results, some useful identities for Bell polynomials are derived using natural properties of the convolution operator.

\begin{theorem}\label{t1}
There hold the composition relations:

\begin{equation}\label{54.1}
\left\lbrace
\begin{array}{l}
y_n=\hat{B}_{n,k_1}(x) \\
\hat{B}_{n,k_1k_2}(x)=\hat{B}_{n,k_2}(y)
\end{array}
\right.
\end{equation}
and

\begin{equation}\label{54.05}
\left\lbrace
\begin{array}{l}
y_n=B_{n,k_1}(x) \\
B_{n,k_1k_2}(x)=\frac{(k_1!)^{k_2}k_2!}{(k_1k_2)!}B_{n,k_2}(y)
\end{array}
\right.
\end{equation}
\end{theorem}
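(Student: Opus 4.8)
The plan is to exploit Lemma \ref{l2}, which identifies partial ordinary Bell polynomials with integer convolution powers: $x_n^{*k}=\hat{B}_{n,k}(x)$. The composition identity \eqref{54.1} should then reduce to the elementary associativity of convolution powers, namely that raising a sequence to the $k_1$-th convolution power and then to the $k_2$-th equals raising it to the $(k_1k_2)$-th power. I would begin by setting $y_n=\hat{B}_{n,k_1}(x)$ and invoking Lemma \ref{l2} to rewrite this as $y_n=x_n^{*k_1}$. Applying Lemma \ref{l2} a second time to the sequence $y$ gives $\hat{B}_{n,k_2}(y)=y_n^{*k_2}=(x^{*k_1})_n^{*k_2}$. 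The final step is to observe that $(x^{*k_1})^{*k_2}=x^{*k_1k_2}$, which is immediate from the definition of convolution powers as repeated self-convolution, so that $\hat{B}_{n,k_2}(y)=x_n^{*k_1k_2}=\hat{B}_{n,k_1k_2}(x)$ by Lemma \ref{l2} once more.

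For the second identity \eqref{54.05}, the plan is to translate between the ordinary and exponential Bell polynomials using the conversion relation \eqref{9)}, which states $\hat{B}_{n,k}(x)=(k!/n!)B_{n,k}(y)$ where $y_n=n!x_n$. First I would express everything in terms of ordinary Bell polynomials, apply the already-established identity \eqref{54.1}, and then convert back to exponential Bell polynomials, keeping careful track of the factorial prefactors. The combinatorial factor $(k_1!)^{k_2}k_2!/(k_1k_2)!$ should emerge precisely from these conversions, since each application of \eqref{9)} introduces a ratio of factorials, and the scaling \eqref{9.1} of ordinary Bell polynomials under $x\mapsto cx$ may be needed to reconcile the intermediate argument sequences.

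\textbf{The main obstacle} I anticipate lies in the second identity rather than the first. The first is essentially a clean corollary of Lemma \ref{l2} combined with associativity of convolution, and requires only that the zero-support hypothesis of Lemma \ref{l2} be inherited by the intermediate sequence $y_n=\hat{B}_{n,k_1}(x)$ (which it is, since $\hat{B}_{n,k_1}(x)=0$ for $n<k_1$, so an index shift as in \eqref{16)} may be needed to apply the lemma in the form stated). The delicate point in the second identity is that the argument sequence in the exponential setting must be tracked through two separate applications of the conversion \eqref{9)}: the sequence $y$ in \eqref{54.05} is itself defined as $B_{n,k_1}(x)$, not as a factorial-rescaled version, so I would need to verify that the factorial weights composing across the two conversions collapse exactly to $(k_1!)^{k_2}k_2!/(k_1k_2)!$ and that no residual argument rescaling survives. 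Matching these factorials correctly, rather than any deep idea, is where the real care is required.
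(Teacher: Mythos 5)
Your proposal is correct and follows essentially the same route as the paper: identity \eqref{54.1} is obtained from $(x^{*k_1})^{*k_2}=x^{*k_1k_2}$ via Lemma \ref{l2}, and \eqref{54.05} follows by converting through \eqref{9)} and \eqref{9.1}, with the factorial bookkeeping working out exactly as you anticipate.
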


\begin{proof}
Firstly note that relation $x^{*k_1k_2}=({x}^{*k_1})^{*k_2}$ clearly holds noting the generating functions. Using Lemma~\ref{l2}, one is able to characterize this in terms of Bell polynomials as (\ref{54.1}). By using (\ref{9)}), relation (\ref{54.05}) can be recovered from (\ref{54.1}).
\end{proof}
An interesting consequence of the above result is that the ratio of some carefully written nested Bell polynomials does not depend on the argument sequence, \emph{i.e.} there holds the relation:

\begin{equation}\label{B/B}
\frac{B_{n,k_1}(y'')}{B_{n,k_2}(y')}=\frac{(k_1!)^{k_2-1} } {(k_2!)^{k_1-1} }
\end{equation}
where ${y_n}'=B_{n,k_1}(x)$ and ${y_n}''=B_{n,k_2}(x)$. Identity~(\ref{B/B}) is proved by noting the relation $ (x^{*k_1})^{*k_2}=(x^{*k_2})^{*k_1} $, using Lemma~\ref{l2} and relation (\ref{9)}) subsequently.

Finally in the next theorem, it is shown that the inverse relation of a single partial Bell polynomial can be easily obtained from the convolution calculus point of view. This is realized by raising a sequence to a (convolution) power and then taking its root.
\begin{theorem}\label{tinv}
There hold the inverse relations:

\begin{equation} \label{90.1}
\left\lbrace
\begin{array}{ll}
y_n =\hat{B}_{n,k} (x)																								&               \\
x_n	=\sum _{j=0}^{n-1}      \binom{1/k}{j}        x_{1} ^{1-kj} \hat{B}_{n-1,j} (w)	& n,k\in {\mathbb{N}}
\end{array}
\right.
\end{equation}
where $w_n=y_{n+k}$, and

\begin{equation} \label{90)}
\left\lbrace
\begin{array}{ll}
y_n =					B_{n,k} (x) 			&																	\\
x_n	=\sum _{j=0}^{n-1}  (1/k)_{j}   x_{1} ^{1-kj}  nB_{n-1,j} (   w  )	& n,k\in {\mathbb{N}}
\end{array}
\right.
\end{equation}
where $w_n=y_{n+k} / \binom{n+k}{k} $, for partial Bell polynomials of ordinary and exponential types respectively.
\end{theorem}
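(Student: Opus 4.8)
The plan is to recognize both inverse relations as instances of convolution-root extraction, so that the formulas of Section 3 apply almost directly. For the ordinary-type relation \eqref{90.1}, I would invoke Lemma \ref{l2} to read the hypothesis $y_n = \hat{B}_{n,k}(x)$ as the statement $y = x^{*k}$, i.e. $y$ is the $k$-th convolution power of $x$. Since $x_n = 0$ for $n < 1$ and $x_1 \neq 0$, the generating-function identity $Y(z) = X(z)^k$ shows that the least nonzero index of $y$ is exactly $n_0 = k$, with $y_k = x_1^k$; in particular $k \mid n_0$, so the convolution root $x = y^{*1/k}$ exists and is given by the extended formula \eqref{54)}. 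Applying \eqref{54)} to $y$ with $n_0 = k$ (so $n_0/k = 1$) yields $x_n = y_k^{1/k}\sum_{j=0}^{n-1}\binom{1/k}{j}\hat{B}_{n-1,j}(v)$ where $v_m = y_{m+k}/y_k$. It then remains to put this in the claimed form: I would set $w_m = y_{m+k}$, note $y_k^{1/k} = x_1$ and $v = w/x_1^k$, and use the homogeneity property \eqref{9.1} in the form $\hat{B}_{n-1,j}(w/x_1^k) = x_1^{-kj}\hat{B}_{n-1,j}(w)$ to absorb the normalization into the summand and recover \eqref{90.1} exactly.

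For the exponential-type relation \eqref{90)}, I would reduce to the case just proved via the conversion \eqref{9)}. Setting $\tilde{x}_n = x_n/n!$, the definition \eqref{9)} gives $\hat{B}_{n,k}(\tilde{x}) = (k!/n!)B_{n,k}(x) = (k!/n!)y_n =: \tilde{y}_n$, so $\tilde{x}$ and $\tilde{y}$ satisfy the hypothesis of \eqref{90.1}. Applying the first relation produces $\tilde{x}_n$ in terms of $\hat{B}_{n-1,j}(\tilde{w})$ with $\tilde{w}_m = \tilde{y}_{m+k} = (k!/(m+k)!)\,y_{m+k}$. The key computation is to verify that $\tilde{w}_m = w_m/m!$ with $w_m = y_{m+k}/\binom{m+k}{k}$; expanding $\binom{m+k}{k} = (m+k)!/(m!\,k!)$ confirms this identity. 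A second application of \eqref{9)} then converts $\hat{B}_{n-1,j}(\tilde{w}) = (j!/(n-1)!)\,B_{n-1,j}(w)$, and multiplying through by $n!$ (using $n!/(n-1)! = n$ and $\tilde{x}_1 = x_1$) produces the factor $j!\,n$ and delivers \eqref{90)}.

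I expect the main difficulty to be bookkeeping rather than any deep step: confirming that the least nonzero index of $y$ is precisely $k$, so that the shifted order $n-1$ and the shifted arguments $y_{m+k}$ emerge correctly; tracking the branch of the $k$-th root so that $y_k^{1/k} = x_1$ matches the prescribed leading value, with the usual $\pm$ ambiguity for even $k$ already flagged after \eqref{54)}; and executing the factorial manipulations in the exponential case cleanly. Each of these is routine once the reduction to convolution roots is established, which is the conceptual heart of the argument.
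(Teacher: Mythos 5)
Your proposal is correct and follows essentially the same route as the paper: interpret $y_n=\hat{B}_{n,k}(x)$ as the convolution power $y=x^{*k}$, extract the $k$-th convolution root via the shifted root formula (the paper cites \eqref{16)} and Lemma \ref{l6}, you cite Lemma \ref{l2} and \eqref{54)}, which is the same toolkit with the shift applied at a different point), and then pass to the exponential case through \eqref{9)}. Your write-up in fact supplies the bookkeeping (the identification $n_0=k$, $y_k=x_1^k$, the homogeneity step, and the factorial conversion $\tilde{w}_m=w_m/m!$) that the paper's two-line proof leaves implicit.
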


\begin{proof}
Using (\ref{16)}) and Lemma~\ref{l6} consecutively proves the inverse relation (\ref{90.1}) for ordinary Bell polynomials. Relation \eqref{9)} can then be used to obtain the inverse relation for partial exponential Bell polynomials (\ref{90)}) from (\ref{90.1}).
\end{proof}

Theorem~\ref{tinv} can be alternatively proved in a more tedious way without using Lemmas~\ref{l2} and \ref{l6} (see appendix).

\section{Algorithms for computing Bell polynomials}\label{sec:AL}
Several methods have been proposed for computing Bell polynomials so far. For instance in \cite{Melman}, authors provide an efficient algorithm to compute Bell polynomials using compositae of generating functions. However computing a Bell polynomial with the argument sequence $x=\lbrace x_1,x_2,\cdots\rbrace$ using this method requires finding an analytic function such that its successive derivatives at some point coincides with the sequence $x$. One then needs to decompose such a function manually to elementary functions, before proceeding with the algorithm~\cite{Melman}. Unfortunately, it is not clear how such a function can be found in practice, for a given $x$. Hence we need to rely on another approach to compute Bell polynomials with arbitrary argument sequences. Using definition (\ref{3}) directly is the basic option of this kind, which is however the most computationally demanding. To see why, note that the number of monomials appearing in (\ref{3}) equals $p_k(n)$, which is the number of ways integer $n$ can be partitioned into $k$ parts. The average of these numbers for a fixed $n$ can be represented by the partition function $\sum_{k=0}^n p_k(n)/(n+1)=p(n)/(n+1)$. Allowing for a large $n$, this yields in the asymptotic relation:
$$
\frac{\sum_{k=0}^n p_k(n)}{n+1} \sim \frac{\exp(\pi\sqrt{2n/3})}{4n(n+1)\sqrt{3}}
$$
as $n\to \infty$. This exponential growth rate accounts for the number of sums only, let alone the multiplications involved in computing each monomial in (\ref{3}). As a more efficient alternative to (\ref{3}), a partial exponential Bell polynomial $B_{n,k}(x)$ is conventionally calculated using the recurrence relation (\ref{3.1}). This is also how the custom method implemented in MATLAB~\cite{matlab} works. The aim of this section is to provide new algorithms which are more efficient than (\ref{3.1}) at computing $B_{n,k}(x)$. All this algorithms take the general sequence $x=\lbrace x_1,x_2,\cdots,x_n\rbrace$ as the input.

First, it is worthwhile noting that calculating ordinary Bell polynomials first and then performing a conversion step using relation (\ref{9)}) is much more efficient than a direct use of (\ref{3.1}). Because despite (\ref{14)}), relation (\ref{3.1}) includes $n-k+1$ multiplications by binomial coefficients. In addition, a direct usage of (\ref{3.1}) requires computing the binomial coefficients, which with respect to the first index $n$, adds a cost of order $O(n^2)$ at minimum. However conducting an indirect calculation using ordinary Bell polynomials requires only $O(n)$ for conversion purposes. This explains why we base our main results in this note on the the following algorithm:
\paragraph{Algorithm 1}
\begin{equation}\label{91}
\begin{array}{ll}
\textbf{Set} \quad y_i \gets x_i/i!, \quad i=1,2, \ldots, n-k+1	\\
\textbf{Initiate} \quad \hat{B}_{0,0}(y)=1, \hat{B}_{i,0}(y)=0, \quad i=1,2, \ldots, n-k 	\\
\textbf{For} \quad l=1,2, \ldots, k \quad \textbf{do}												\\
\quad\quad \textbf{For} \quad i=l,l+1, \ldots, n-k+l \quad \textbf{do}										\\
\quad\quad \quad \quad \hat{B}_{i,l}(y) \gets \sum_{j=1}^{i-l+1} y_j \hat{B}_{i-j,l-1}(y)	\\
\quad\quad \textbf{end}																\\
\textbf{end} \\
\textbf{Set} \quad B_{n,k}(x) \gets (n!/k!)\hat{B}_{n,k}(y)
\end{array}
\end{equation}
rather than the classical approach (\ref{3.1}). Computation complexity of Algorithm~1 is given in the following theorem.

\begin{proposition}\label{t2}
Algorithm~1 needs exactly $Q^{(1)}_{n,k}$ basic arithmetic operations where

\begin{equation}\label{91.8}
Q^{(1)}_{n,k}=kn^2+2(-k^2+k+1)n+k^3-2k^2+2
\end{equation}
\end{proposition}

\begin{proof}
The conversion steps and the summation in (\ref{91}) consist of $2n-k+2$ and $2(i-l)+1$ operations respectively. Taking into account the nested loops yields

\begin{equation}
Q^{(1)}_{n,k}	=	2n-k+2+\sum_{l=1}^{k} \sum_{i=l}^{n-k+l} (2(i-l)+1)		
\end{equation}
which gives the required result.
\end{proof}

\subsection{Reduction of the second index}\label{subsec:second-index}

As Theorem~\ref{t1} suggests, a modified algorithm can be constructed on the basis of Algorithm~1 for computing partial Bell polynomial $B_{n,k}(x)$ ($n,k \geq 2$) after factorizing index $k$ into integers $p_j \in \mathbb{N}$ as

\begin{equation}\label{91.9}
k=\prod_{j=1}^{\sigma} p_j
\end{equation}
where

\begin{equation}\label{91.95}
p_1 \geq p_2 \geq \ldots \geq p_{\sigma}
\end{equation}
This can be done in the extreme case, by considering $p_j$'s to be primes. There are prime factorization schemes known to have sub-exponential complexity with respect to the number of bits, i.e. faster than $O\left((1+\epsilon)^b\right)$ where $b=\lfloor{\log_2 k}\rfloor+1$ and $\epsilon>0$ \cite{b1,prime_subex}. The proposed algorithm is described below.
\paragraph{Algorithm 2}
\begin{equation}\label{92}
\begin{array}{ll}
\textbf{Set} \quad y_i \gets x_i/i!, \quad i=1,2, \ldots, n+\sigma-\sum_{m=1}^{\sigma} p_m	\\
\textbf{For}\quad j=1, 2, \ldots, \sigma \quad\textbf{do}												\\
\quad\quad\textbf{Calculate}\quad r_j \gets \sum_{m=1}^{\sigma-j+1} p_m \\
\quad\quad\textbf{Initiate} \quad \hat{B}_{0,0}(y)=1, \hat{B}_{i,0}(y)=0, \quad i=1,2, \ldots, n-r_j + \sigma-j	\\
\quad\quad\textbf{For} \quad l=1,2, \ldots, p_{\sigma-j+1} \quad \textbf{do}												\\
\quad\quad\quad\quad \textbf{For} \quad i=l,l+1, \ldots, l+n+\sigma-j-r_j \quad \textbf{do}										\\
\quad\quad\quad\quad \quad \quad \hat{B}_{i,l}(y) \gets \sum_{m=1}^{i-l+1}y_m \hat{B}_{i-m,l-1}(y)	\\
\quad\quad\quad\quad \textbf{end}																\\
\quad\quad\textbf{end}  \\
\quad\quad \quad\quad \textbf{Set}\quad {y}_l \gets 0,	\quad l=1,  2  , \ldots,  	p_{\sigma-j+1}-1\\
\quad\quad \quad\quad \textbf{Set}\quad {y}_l \gets \hat{B}_ {l, p_{\sigma-j+1}} (y),	\quad  l=p_{\sigma-j+1},  p_{\sigma-j+1}+1  , \ldots,  n-r_j+p_{\sigma-j+1} + \sigma-j\\
\textbf{end}																								\\
\textbf{Set} \quad B_{n,k}(x) \gets n!y_n/k!
\end{array}
\end{equation}
Computation complexity of Algorithm~2 is given in the following theorem.

\begin{proposition}\label{t3}
The exact number of basic arithmetic operations demanded by Algorithm~2 (\ref{92}) is given by

\begin{equation}\label{92.01}
Q^{(2)}_{n,k}=a(k)n^2+b(k)n+c(k)
\end{equation}
where

\begin{equation}\label{92.1}
\begin{array}{l}
a(k)=\sum_{j=1}^{\sigma}p_j \\
b(k)=2+2\sum_{j=1}^{\sigma}(j-\sum_{m=1}^{j}p_m )p_j \\
c(k)= \sum_{j=1}^{\sigma}( j-\sum_{m=1}^{j}p_m )^2p_j -\sum_{j=1}^{\sigma}p_j +2\sigma
\end{array}
\end{equation}
\end{proposition}

\begin{proof}
Noting the cost of conversion steps ($2n+\sigma +1-\sum_{m=1}^{\sigma}p_m$) and the calculations in the loops of (\ref{92}) it is possible to write

\begin{equation}\label{93}
Q^{(2)}_{n,k}=2n+2\sigma-\sum_{m=1}^{\sigma}p_m +\sum_{j=1}^{\sigma}\sum_{i,l}(2(i-l)+1)
\end{equation}
where the inner summation is held over all ordered pairs $(i,l)$ satisfying $  1\leq l \leq p_{\sigma-j+1} $ and $ l \leq i \leq l+n+\sigma-j-\sum_{m=1}^{\sigma-j+1} p_m $. Expression (\ref{93}) can then be written in the form (\ref{92.01}) after some manipulations.
\end{proof}

As it can be seen in Propositions~\ref{t2} and \ref{t3}, there holds:

\begin{equation}\label{94}
\begin{array}{l}
Q^{(1)}_{n,k} \sim kn^2	\\
Q^{(2)}_{n,k} \sim ( \sum_{j=1}^{\sigma}p_j) n^2
\end{array}
\end{equation}
for computation costs of Algorithms~1 and 2, when $n \gg k$. This is a significant improvement as $\sum_{j=1}^{\sigma}p_j\leq \prod_{j=1}^{\sigma}p_j=k$ holds for all combinations of $p_j \in \mathbb{N}$ where $\sigma,p_j \geq 2$. In order to see this, define $\underline{p}=\min_{j}(p_j)$, $a_j=p_j-\underline{p}$ and $\Omega_{\sigma-j}$ as the set of all subsets of $\Omega=\lbrace 1,2, \ldots, \sigma \rbrace$ with $\sigma-j$ members. One can then write

\begin{equation}\label{95}
\begin{array}{ll}
k	&=\prod_{j=1}^{\sigma} (\underline{p}+a_j)	\\
	&=\underline{p}^\sigma + \underline{p}^{\sigma-1}(\sum_{j=1}^{\sigma}a_j)+\sum_{j=0}^{\sigma-2}\underline{p}^j \sum_{\omega \in \Omega_{\sigma-j}}\prod_{i \in \omega} a_i	\\
	&\geq \sigma\underline{p}+\sum_{j=1}^{\sigma}a_j \\
	&=\sum_{j=1}^{\sigma}p_j \\	
\end{array}
\end{equation}
where the last inequality is derived from $\underline{p}^\sigma \geq \sigma \underline{p}$ which holds for all natural numbers $\underline{p}, \sigma \geq 2$. It can be shown that the equality holds only in the case $p_1=p_2=\sigma=2$. Note that if $n \gg k$ does not apply, then Algorithm~2 does not necessarily outperform Algorithm~1 with respect to computation cost. In that case, even the cost of factorizing $k$ before running (\ref{92}) is not negligible any more.

In the following, some more details about Algorithm~2 are discussed. Firstly, it is worth remarking that the best performance is expected when integer $k$ is factorized as much as possible, i.e. into primes, before running Algorithm~2. This is obvious because when $\sigma$ is large enough, according to the inequality:

\begin{equation}\label{96}
\begin{array}{ll}
k \geq ( \min_{j}(p_j))^\sigma \geq \sigma \max_{j}(p_j) \geq \sum_{j=1}^{\sigma}p_j
\end{array}
\end{equation}
increasing $\sigma$ results in an exponentially growing gap between the two computation costs $Q^{(1)}_{n,k}$ and $Q^{(2)}_{n,k}$ asymptotically.

Secondly, the order of factors in calculation of the nested Bell polynomials

\begin{equation}\label{97}
\begin{array}{l}
y_l \gets \hat{B}_{l,p_{\sigma}}(y)	\\
y_l \gets \hat{B}_{l,p_{\sigma-1}}(y)	\\
\vdots \\
y_l \gets \hat{B}_{l,p_{1}}(y)
\end{array}
\end{equation}
affects the algorithm performance. This order is chosen by default as (\ref{91.95}) in (\ref{92}) for Algorithm~2. Ordering factors in this way is the best choice with regard to computation cost, as shown in the following theorem.

\begin{theorem}\label{thm:order}
The ordering (\ref{91.95}) is optimal in Algorithm~2.
\end{theorem}
\begin{proof}
Assume $p_{i} \geq p_{i+1}$ and consider the two different factorization orderings
$$p_1,p_2, \ldots, p_{i-1}, p_i, p_{i+1}, \ldots, p_{\sigma} $$
and
$$p_1,p_2, \ldots, p_{i-1}, p_{i+1}, p_i, \ldots, p_{\sigma} $$
in Algorithm~2 with the corresponding computation costs of $Q^{(2)}_{n,k}$ and $\hat{Q}^{(2)}_{n,k}$ respectively. For simplicity, define $u=i-1-\sum_{j=1}^{i-1} p_j$. Using relation (\ref{92.01}), it is possible to write

\begin{equation}\label{98}
\begin{array}{l}
					\hat{Q}^{(2)}_{n,k}-Q^{(2)}_{n,k}=	\\
					2n \left( \left(u+1-p_{i+1}\right)p_{i+1}-\left(u+1-p_i\right)p_{i}+\left(u+2-p_i-p_{i+1}\right)(p_i-p_{i+1}) \right)+	\\
					\left( (u+1-p_{i+1})^2p_{i+1}-(u+1-p_i)^2p_{i}+(u+2-p_i-p_{i+1})^2(p_i-p_{i+1}) \right)=	\\
					2n \left( (u+1)\left(p_{i+1}-p_i\right)+\left(p_i-p_{i+1}\right)\left(p_i+p_{i+1}\right)+\left(u+2-p_i-p_{i+1}\right)\left(p_i-p_{i+1}\right) \right) +	\\
					p^3_{i+1}-p^3_{i}+2(u+1)(p_{i}-p_{i+1})(p_{i}+p_{i+1})-(u+1)^2(p_{i}-p_{i+1})+ \\
					(u+2-p_{i}-p_{i+1})^2(p_{i}-p_{i+1})=\\
					(p_i-p_{i+1}) \left( 2n+ p_{i}p_{i+1}-2p_{i}-2p_{i+1}+2u+3  \right)
\end{array}
\end{equation}
On the other hand since $p_j \geq 2$ holds for all $1\leq j \leq \sigma$ one can write

\begin{equation}\label{99}
\begin{array}{rl}
p_{i}+p_{i+1}-u&\leq	\sum_{j=1}^{\sigma}p_j-\sigma+2\\
				&\leq	k-\sigma+2
\end{array}
\end{equation}
where the last inequality was derived from (\ref{95}). Using the upper bound (\ref{99}) in (\ref{98}) results in

\begin{equation}\label{100}
\hat{Q}^{(2)}_{n,k}-Q^{(2)}_{n,k}	\geq (p_i-p_{i+1}) \left( 2(n-k)+ p_{i}p_{i+1}+2(\sigma-2) +3 \right)\geq 0
\end{equation}
This proves $\hat{Q}^{(2)}_{n,k} \geq Q^{(2)}_{n,k}$ and therefore justifies the ordering (\ref{91.95}) as being optimal.
\end{proof}

\subsection{Reduction of the first index}\label{subsec:first-index}
As it was discussed above, the second index in a Bell polynomial calculation may only be reduced when it is not prime. Analogously, the first index cannot be reduced in the general case. More precisely, a reduction of the Bell polynomial's first index is only possible for the argument sequences with zero initial samples. For these arguments, the computation cost may be further improved by reducing the first index in the Bell polynomial using the following lemma.

\begin{lemma}\label{nthm}
Define $y_n=x_{n+n_0}/(n+n_0)_{n_0}$, where $x$ is a sequence such that $x_n=0$ for $n \leq n_0$ where $n_0 \geq 0$. There holds:

\begin{equation}\label{nbreakExp}
B_{n,k}(x)=(n)_{kn_0}B_{n-kn_0,k}(y)
\end{equation}
\end{lemma}

\begin{proof}
First we introduce the auxiliary sequences $\hat{x}_n=x_n/n!$ and $\hat{y}_n=y_n/n!$. To prove this theorem, one should then simply exploit the shift invariance property of convolution and write $\hat{x}^{*k}_{n}=\hat{y}^{*k}_{n-kn_0}$ where $\hat{y}_n=\hat{x}_{n+n_0}$ and $\hat{x}_n=0$ for $n \leq n_0$. Using Lemma~\ref{l2}, This results in the identity

\begin{equation}\label{nbreak}
\hat{B}_{n,k}(\hat{x})=\hat{B}_{n-kn_0,k}(\hat{y})
\end{equation}
for ordinary Bell polynomials. Finally, Using the relation (\ref{9)}) leads to the similar identity (\ref{nbreakExp}) for exponential Bell polynomials as given in the statement of this theorem.
\end{proof}

If the argument sequence of Bell polynomial contains $n_0$ initial zeros, then some unnecessary calculations can be avoided based on Lemma~\ref{nthm}. In this case instead of using Algorithm~2 to calculate $B_{n,k}(x)$, one can opt to compute the right hand side of (\ref{nbreakExp}) using the same technique. This results in the most general and the most efficient algorithm, which exploits reductions in both indexes. Suppose that $x$ is a sequence such that $x_n=0$ for $n \leq n_0$ where $n_0 \geq 0$ and the second index $k$ can be factorized as (\ref{91.9}). Then Bell polynomial $B_{n,k}(x)$ can be computed using the following algorithm:
\paragraph{Algorithm 3}
\begin{equation}\label{GenAl}
\begin{array}{ll}
\textbf{Set} \quad y_i \gets x_{i+n_0}/(i+n_0)!, \quad i=1,2, \ldots, n-kn_0+\sigma-\sum_{m=1}^{\sigma} p_m	\\
\textbf{For}\quad j=1, 2, \ldots, \sigma \quad\textbf{do}												\\
\quad\quad\textbf{Calculate}\quad r_j \gets \sum_{m=1}^{\sigma-j+1} p_m \\
\quad\quad\textbf{Initiate} \quad \hat{B}_{0,0}(y)=1, \hat{B}_{i,0}(y)=0, \quad i=1,2, \ldots, n-kn_0-r_j + \sigma-j	\\
\quad\quad\textbf{For} \quad l=1,2, \ldots, p_{\sigma-j+1} \quad \textbf{do}												\\
\quad\quad\quad\quad \textbf{For} \quad i=l,l+1, \ldots, l+n-kn_0+\sigma-j-r_j \quad \textbf{do}										\\
\quad\quad\quad\quad \quad \quad \hat{B}_{i,l}(y) \gets \sum_{m=1}^{i-l+1}y_m \hat{B}_{i-m,l-1}(y)	\\
\quad\quad\quad\quad \textbf{end}																\\
\quad\quad\textbf{end}  \\
\quad\quad \quad\quad \textbf{Set}\quad {y}_l \gets 0,	\quad l=1,  2  , \ldots,  	p_{\sigma-j+1}-1\\
\quad\quad \quad\quad \textbf{Set}\quad {y}_l \gets \hat{B}_ {l, p_{\sigma-j+1}} (y),\\
\quad\quad \quad\quad l=p_{\sigma-j+1},  p_{\sigma-j+1}+1  , \ldots,  n-kn_0-r_j+p_{\sigma-j+1} + \sigma-j\\
\textbf{end}																								\\
\textbf{Set} \quad B_{n,k}(x) \gets n!y_{n-kn_0}/k!
\end{array}
\end{equation}

In the above algorithm, $y_l=0$ is assumed to hold for all $l\leq 0$. Therefore, the last line in (\ref{GenAl}) indicates that $B_{n,k}(x)=0$ holds for all $x$ when $n\leq kn_0$. In this trivial case, $B_{n,k}(x)$ is immediately obtained and there is no need to run Algorithm~3. Otherwise, let us denote the number of basic arithmetic operations needed in Algorithm~3~(\ref{GenAl}) to compute the Bell polynomial $B_{n,k}(x)$ by the generalized notation $Q^{(3)}_{n,k,n_0}$. There holds:
$$
Q^{(3)}_{n,k,0}=Q^{(2)}_{n,k}
$$
which means that no improvement is possible over Algorithm~2 when $n_0=0$. In contrast, as stated in the following proposition, the computation complexity can be significantly improved in the general case, \emph{i.e.} when $n_0\geq 0$.

\begin{proposition}
Algorithm~3 needs $Q^{(3)}_{n,k,n_0}$ basic arithmetic operations to compute a Bell polynomial $B_{n,k}(x)$, where
\begin{equation}\label{101}
Q^{(3)}_{n,k,n_0}=Q^{(2)}_{n-kn_0,k}+ \max \left\lbrace 0 , n_0+\sigma - \sum_{m=1}^{\sigma} p_m \right\rbrace
\end{equation}
and $Q^{(2)}$ is defined in (\ref{92.01}).
\end{proposition}

Due to (\ref{101}), the number of basic arithmetic operations avoided in Algorithm~3 compared to a direct use of Algorithm 2 equals

\begin{equation}\label{102}
2a(k)n_0k n
-a(k)k^2n_0^2+b(k)kn_0
- \max \left\lbrace 0 , n_0+\sigma - \sum_{m=1}^{\sigma} p_m \right\rbrace
\end{equation}
where all the terms except the first one are independent of $n$. This shows a reduction equivalent to an order of magnitude with respect to $n$, when the argument sequence $x$ has at least one zero sample initially. This is significant when $n$ is large.

\subsection{Numerical evaluation}\label{subsec:numerical}
In this section the presented algorithms are evaluated in numerical examples. First, a simple experiment was conducted to show the benefits of index reduction techniques on Bell polynomials computation cost. In this experiment, the speed-up ratio
\begin{equation}\label{R_n,k}
R_{n,k}=\frac{Q^{(1)}_{n,k}}{Q^{(2)}_{n,k}}
\end{equation}
between Algorithm~2 and Algorithm~1 which does not exploit index reductions are evaluated for computing Bell polynomials with various indexes $n$ and $k$ and general argument sequences with no information available about their initial samples ($n_0=0$). The result is shown in Figure~\ref{fspeedup}. By increasing the value of $n$, each curve in Figure~\ref{fspeedup} tends to the limit ratio
$$
\lim_{n\to +\infty} R_{n,k}=\frac{\prod_{j=1}^{\sigma} p_j}{\sum_{j=1}^{\sigma} p_j}
$$
which is always greater than one. This indicates that a Bell polynomial with a large enough $n$ is always cheaper to compute using Algorithm~2 which uses index reductions, than Algorithm~1. Note that Algorithm~1 is always more efficient than the conventional recurrence relation (\ref{3.1}) and the computation complexity becomes significant only when the polynomial indexes are large.

\begin{figure}
\centering
\includegraphics[scale=0.5]{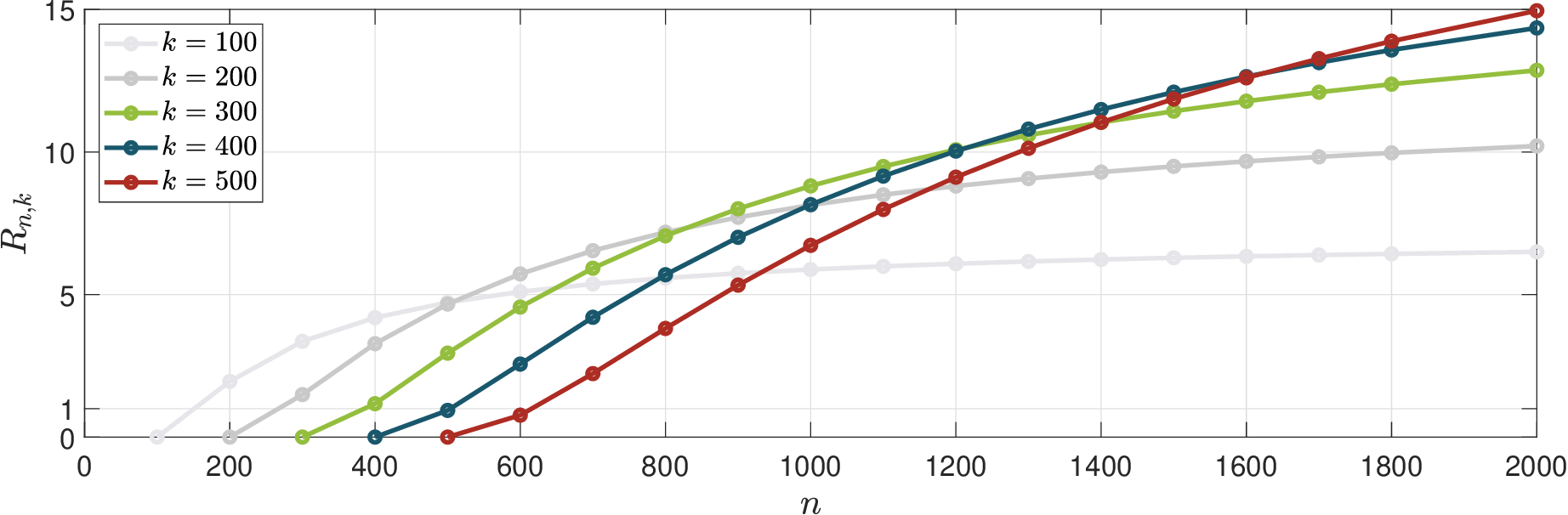}
\caption{\footnotesize{The speed-up ratios (\ref{R_n,k}) between the two Algorithms~1 and 2 for computing Bell polynomials with various indexes $n$ and $k$.}}
\label{fspeedup}
\end{figure}

Next, we compare all the computation methods in this section in a unified experiment. For this purpose, we will use the recurrence relation (\ref{3.1}) and Algorithms~1, 2 and 3 for computing a Bell polynomial with a general argument $x$. In this experiment, $n_0$ number of initial samples in the argument sequence are assumed to be zero, \emph{i.e.} $x_1=x_2=\cdots=x_{n_0}=0$. The time needed for this computation is recorded for each algorithm in Table~\ref{tb1}. The algorithms were run on a computer with an Intel(R) Core(TM) i5-4310U CPU at 2.00GHz 2.6GHz and 8 GB of memory.

\begin{table}
\begin{center}
\begin{tabular}{|l c c c c c|}
\hline
\multicolumn{6}{|c|}{Computation time (s)}\\
 						& 		& $n=20$, $k=6$ & $n=20$, $k=8$ & $n=20$, $k=10$	& $n=20$, $k=12$\\  
\hline
Recurrence (\ref{3.1})	&\vline &7.3791			&13.6547		&49.2792			&20.1058\\         
Algorithm~1				&\vline &6.9059			&12.7310		&47.8660			&20.0026\\
Algorithm~2				&\vline &5.5574			&7.0280 		&14.2892			&7.3457\\
Algorithm~3 ($n_0=1$)	&\vline &3.4950   		&2.8674   		&2.4046				&1.5961\\
Algorithm~3 ($n_0=2$)	&\vline &1.3917   		&0.4704   		&NA					&NA\\
Algorithm~3 ($n_0=3$)	&\vline &0.0370   		&NA   			&NA					&NA\\
\hline
\hline
\end{tabular}
\end{center}
\caption{\label{tb1} \footnotesize{Times needed to compute $B_{n,k}(x)$ in seconds.}}
\end{table}

The general superiority of Algorithms~1, 2 and 3 to the classical approach, that is the recurrence relation (\ref{3.1}), is emphasized in Table~\ref{tb1} in this experiment. As expected, Algorithm~3 is the fastest since it exploits the knowledge that $x$ has a few zero samples and therefore avoids some unnecessary calculations. When more initial samples of $x$ are known to be zero, more unnecessary calculations can be prevented which results in yet shorter computation times. With a too large $n_0$, \emph{i.e.} when $n\leq kn_0$, the computation task becomes trivial and $B_{n,k}(x)=0$ holds for all $x$ which explains the entries 'NA' in Table~\ref{tb1}. In the most general case when no samples in $x$ are known to be zero, Algorithm~3 is reduced to Algorithm~2 which only enjoys reductions in the second index. This algorithm still outperforms both Algorithm~1 and the recurrence relation (\ref{3.1}) according to Table~1. Finally as it was expected, all the presented algorithms in this section including Algorithm~1 exhibit shorter computation times than the conventional approach. 

\section{Conclusion}\label{sec:conc}
In this note the connection of Bell polynomials to convolution calculus was addressed. The proofs of several properties of Bell polynomials can be shortened considerably from this convolution calculus point of view. For instance, it was shown how ordinary Bell polynomials can be used to obtain compact representations for rational convolution powers of sequences and some identities involving Bell polynomials were introduced. It was later shown that these identities have important consequences in computational sense.

In Section~\ref{sec:AL}, three algorithms were introduced for computing partial exponential Bell polynomials, which are more efficient than the classical approach (\ref{3.1}) used for computing these polynomials. It was first addressed how an indirect calculation based on ordinary Bell polynomials (Algorithm~1) is computationally more efficient than the conventional approach where exponential Bell polynomials are computed recursively. Then Algorithm~2 was introduced which reduces the second index $k$ of a Bell polynomial in computation. Although there are cases in which using Algorithm~1 is cheaper, Algorithm~2 was a significant improvement over Algorithm~1 in most troublesome cases, \emph{e.g.} when the first index $n$ is large. Finally Algorithm~3 was presented as an improvement of Algorithm~2, by incorporating reductions in both indexes of a Bell polynomial. This algorithm can exploit more information of the argument sequence through parameter $n_0$ than the previous algorithms to further reduce the computation cost. Superiority of Algorithm~3 is accentuated when the first index of Bell polynomial $n$ grows larger than its second index $k$. However as it was pointed out earlier, sometimes Algorithm~3 is not superior when $n$ is too close to $k$ and the argument sequence does not have any zero initial samples. In this case, it may demand more arithmetic operations than Algorithm~1 for computation. Algorithms~1, 2 and 3 can make the use of Bell polynomials in scientific computations much less costly.\\

\subsection{Acknowledgements}
I would like to thank the anonymous reviewers for their insight and helpful comments.

\subsection{Funding and conflicts of interests}
This research was funded by KTH Royal Institute of Technology. The Author declares that there is no conflict of interest.

\section{Appendix}
\noindent\textbf{Alternative proof of Theorem~\ref{tinv}}\\
\noindent Using the generating function of partial exponential Bell polynomials we may write \cite[Theorem 11.1]{10}
$$
\sum_{n=k}^{+\infty}y_n \frac{z^n}{n!}=\frac{1}{k!}\left(\sum_{n=1}^{+\infty}x_n\frac{z^n}{n!}\right)^k
$$
and thereby using an appropriate variable change we obtain
\begin{align*}
\sum_{n=1}^{+\infty}x_n\frac{z^n}{n!}&=\left(k!\sum_{n=0}^{+\infty}y_{n+k} \frac{z^{n+k}}{(n+k)!}\right)^{1/k} \\
									&=z\left(k!\sum_{n=0}^{+\infty}y_{n+k} \frac{z^n}{(n+k)!}\right)^{1/k}\\
									&=z\left(\sum_{n=0}^{+\infty}\frac{y_{n+k}}{\binom{n+k}{k}} \frac{z^n}{n!}\right)^{1/k}\\
									&=zy_k^{1/k}\left(\sum_{n=0}^{+\infty} \frac{w_n}{y_k}\frac{z^n}{n!}\right)^{1/k}\\
									&=zy_k^{1/k} f(z)
\end{align*}
Then by calculating the successive derivatives of $f(z)$ based on Theorem~B of \cite[section 3.5]{comtet} and using the relation of potential polynomials with Bell polynomials thereafter, we arrive at
\begin{align}\label{eqn:appen:invrel}
\sum_{n=1}^{+\infty}x_n\frac{z^n}{n!}&=zy_k^{1/k}\left(\sum_{n=0}^{+\infty}\frac{z^n}{n!} \sum_{j=0}^{n}(1/k)_j B_{n,j}(w/y_k) \right)\nonumber\\
									&=\sum_{n=0}^{+\infty}\frac{z^{n+1}}{n!} \sum_{j=0}^{n}y_k^{1/k-j}(1/k)_j B_{n,j}(w)\nonumber\\
									&=\sum_{n=1}^{+\infty}\frac{n z^n}{n!} \sum_{j=0}^{n-1}y_k^{1/k-j}(1/k)_j B_{n-1,j}(w)
\end{align}
Since
$$y_k^{1/k-j}=\left(B_{k,k}(x)\right)^{1/k-j}=x_1^{1-kj}$$
by equating the corresponding terms on both sides of (\ref{eqn:appen:invrel}) we obtain (\ref{90)}). Relation \eqref{9)} can then be used to obtain the inverse relation for partial ordinary Bell polynomials (\ref{90.1}) form (\ref{90)}).
\hfill$\square$

\end{document}